\title[Some new sums of $q$-trigonometric...]{Some new sums of $q$-trigonometric and related functions through a theta product of Jacobi}
 \theoremstyle{definition}
  \theoremstyle{plain}
  \newtheorem{lemma}      {Lemma}
  \newtheorem{theorem}    {Theorem}
  \newtheorem{corollary}  {Corollary}
  \theoremstyle{remark}
   \newtheorem{remark}{{\bf Remark}}
  \newcommand{\te}{\theta}
  \newcommand{\fr}{\frac}
  \DeclareMathOperator{\Ct}{Ct}
\begin{document}
  \author[M. El Bachraoui and J. S\'{a}ndor]{Mohamed El Bachraoui and J\'{o}zsef S\'{a}ndor}
  \address{Dept. Math. Sci,
 United Arab Emirates University, PO Box 15551, Al-Ain, UAE}
 \email{melbachraoui@uaeu.ac.ae}
 \address{Babes-Bolyai University, Department of Mathematics and Computer Science, 400084 Cluj-Napoca, Romania}
 \email{jsandor@math.ubbcluj.ro}
 \keywords{$q$-trigonometric functions; $q$-digamma function; transcendence.}
\subjclass{33B15, 11J81, 33E05, 11J86}
 \begin{abstract}
We evaluate some finite and infinite sums involving $q$-trigonometric and $q$-digamma functions. Upon letting
$q$ approach $1$, one obtains corresponding sums for the classical trigonometric and the digamma functions.
Our key argument is a theta product formula of Jacobi
and Gosper's $q$-trigonometric identities.
 \end{abstract}
  \date{\textit{\today}}
  \maketitle
\section{Introduction}\label{sec-introduction}
Throughout we let $\tau$ be a complex number in the upper half plane and let $q=e^{\pi i\tau}$.
Note that the assumption $\mathrm{Im}(\tau)>0$ implies
that $|q|<1$.
The $q$-shifted factorials of a complex number $a$ are defined by
\[
(a;q)_0= 1,\quad (a;q)_n = \prod_{i=0}^{n-1}(1-a q^i),\quad
(a;q)_{\infty} = \lim_{n\to\infty}(a;q)_n.
\]
For convenience we write
\[
(a_1,\ldots,a_k;q)_n = (a_1;q)_n\cdots (a_k;q)_n,\quad
(a_1,\ldots,a_k;q)_{\infty} = (a_1;q)_{\infty} \cdots (a_k;q)_{\infty}.
\]
The $q$-gamma function is given by
\[
\Gamma_q(z) = \dfrac{(q;q)_\infty}{(q^{z};q)_\infty} (1-q)^{1-z} \quad (|q|<1)
\]
and it is well-known that $\Gamma_q (z)$ is a $q$-analogue for the gamma function $\Gamma (z)$, see
\cite{Andrews-Askey-Roy, Askey, Gasper-Rahman, Jackson-1, Jackson-2} for details on the function $\Gamma_q(z)$.
The digamma function $\psi(z)$  and the $q$-digamma function $\psi_q (z)$ are given by
\[
\psi(z) = \big(\log\Gamma(z)\big)' = \fr{\Gamma'(z)}{\Gamma(z)} \quad\text{and\quad}
\psi_q (z) = \big(\log\Gamma_q(z)\big)' = \fr{\Gamma_q '(z)}{\Gamma_q (z)}.
\]
By Krattenthaler and Srivastava~\cite{Krattenthaler-Srivastava} one has
$\lim_{q\to 1} \psi_q(z) = \psi(z)$, showing that the function $\psi_q(z)$ is the $q$-analogue for
 the function $\psi (z)$.
Jacobi first theta function is defined as follows:
\[
\theta_1(z \mid \tau) = 2\sum_{n=0}^{\infty}(-1)^n q^{(2n+1)^2/4}\sin(2n+1)z
= i q^{\frac{1}{4}}e^{-iz} (q^2 e^{-2iz},e^{2iz},q^2; q^2)_{\infty}.
\]
Jacobi theta functions have been extensively studied by mathematicians during the last two centuries with hundreds of
properties and formulas as a result. 
Standard references on theta functions include Lawden~\cite{Lawden} and Whittaker~and~Watson~\cite{Whittaker-Watson}.
Among the well-known properties of the function $\theta_1(z\mid\tau)$ which we need in this paper 
we have
\begin{equation}\label{theta-cot}
\fr{\theta_1'(z|\tau)}{\theta_1(z|\tau)} = \cot z + 4\sum_{n=1}^{\infty}\fr{q^{2n}}{1-q^{2n}} \sin (2nz).
\end{equation}
Gosper~\cite{Gosper} introduced $q$-analogues of $\sin z$ and $\cos z$ as follows
\begin{equation}\label{sine-cosine-q-gamma}
\begin{split}
\sin_q \pi z
&=
q^{\fr{1}{4}} \Gamma_{q^2}^2\left(\fr{1}{2}\right) \frac{q^{z(z-1)}}{\Gamma_{q^2}(z) \Gamma_{q^2}(1-z)} \\
\cos_q \pi z
&=
\Gamma_{q^2}^2\left(\fr{1}{2}\right) \fr{q^{z^2}}{\Gamma_{q^2}\left(\fr{1}{2}-z \right) \Gamma_{q^2}\left(\fr{1}{2}+z\right)}.
\end{split}
\end{equation}
and proved that 
\begin{equation}\label{sine-cosine-theta}
\begin{split}
\sin_q (z) = \frac{\theta_1(z\mid \tau')}{\theta_1\left( \frac{\pi}{2}\bigm| \tau' \right)} \qquad \text{and \quad}
\cos_q (z) = \frac{\theta_1\left( z+\frac{\pi}{2} \bigm| \tau' \right)}
{\theta_1 \left( \frac{\pi}{2} \bigm| \tau' \right)} \quad \quad (\tau' = \frac{-1}{\tau}).
\end{split}
\end{equation}
It can be shown that
$\lim_{q\to 1}\sin_q z = \sin z$ and $\lim_{q\to 1}\cos_q z = \cos z$.
Moreover, from (\ref{sine-cosine-q-gamma}), one can easily verify by differentiating logarithms that
$\sin_q' (z)$ is the $q$-analogue of $\sin' (z) = \cos z$ and that
$\cos_q' (z)$ is the $q$-analogue of $\cos' (z) = -\sin z$.
We mention that there are known other examples of $q$-analogues for the functions $\sin z$
and $\cos z$, see for instance the book by Gasper~and~Rahman~\cite{Gasper-Rahman}.
A function which is very important for our current purpose is
\begin{equation}\label{Cotan-q}
\Ct_q(z) = \fr{\sin_q' z}{\sin_q z}
\end{equation}
for which we clearly have $\lim_{q\to 1} \Ct_q(z) = \cot z$. In addition, by taking in
(\ref{sine-cosine-q-gamma}) logarithms and differentiating with respect to $z$ we get
\begin{equation}\label{reflection}
\psi_{q^2}(z)-\psi_{q^2}(1-z) = (2z-1)\log q - \pi\Ct_q (\pi z),
\end{equation}
which is the $q$-analogue of the well-known reflection formula
\[
\psi(z)-\psi(1-z) = -\pi \cot(\pi z).
\]
Jacobi~\cite{Jacobi} proved that
\begin{equation}\label{MainProd}
\fr{(q^{2n};q^{2n})_{\infty}}{(q^2;q^2)_{\infty}^n}
\prod_{k=-\fr{n-1}{2}}^{\fr{n-1}{2}}\te_1 \left(z+\fr{k\pi}{n} \bigm| \tau \right) =
\te_1(nz \mid n\tau),
\end{equation}
see also Enneper~\cite[p. 249]{Enneper}.
This formula turns out to be equivalent to the following $q$-trigonometric identity
of Gosper~\cite[p. 92]{Gosper}:
\begin{equation}\label{SineProd}
\prod_{k=0}^{n-1}\sin_{q^n}\pi \left(z+\fr{k}{n} \right) =
q^{\frac{(n-1)(n+1)}{12}} \frac{(q;q^2)_{\infty}^2}{(q^n;q^{2n})_{\infty}^{2n}} \sin_q n\pi z
\end{equation}
which he apparently was not aware of as
he stated the identity without proof or reference.
Unlike many of Jacobi's results, the formula (\ref{MainProd})
seems not to have received much attention by mathematicians. This is probably due to the lack of applications.
The authors recently in~\cite{Bachraoui-Sandor} offered a new proof for~(\ref{SineProd}) and as an application they established a
$q$-analogue for the Gauss multiplication formula for the gamma function as well as for 
an identity of S\'{a}ndor~and~T\'{o}th~\cite{Sandor-Toth} for a short product on Euler gamma function.
Our purpose in this note is to apply~(\ref{SineProd}) in order to evaluate finite and infinite sums involving
the function $\Ct_q (z)$ along with
 the functions $h_{q,M,a}(k)$ and $f_{q,M,a}(k)$ both defined on integers $k$ as follows:
\begin{equation}\label{h-f}
\begin{split}
h_{q,M,a}(k) &= \fr{1}{\pi}\Big(
(\log q)\fr{2k+a-2M}{2M}-\psi_q \big(\fr{2k+a}{2M} \big) - \psi_q \big(1-\fr{2k+a}{2M} \big) \Big) \\
f_{q,M,a}(k) &= \sum_{n=1}^{\infty}\fr{q^{\fr{2n}{M}}}{1-q^{\fr{2n}{M}}}\sin \fr{(2k+a)n\pi}{M}.
\end{split}
\end{equation}
More specifically, we shall prove the following main results which are new, up 
to the authors' best knowledge.
\begin{theorem}\label{thm-main-1}
Let $M>1$ be an integer and let $a$ be an odd integer. Then

\noindent
\emph{(a)\ }
\[
\sum_{n=1}^{\infty} \fr{1}{n} \Ct_q\Big(\fr{(2n+a)\pi}{2M}\Big)
= -\fr{1}{M} \sum_{k=1}^M  \Ct_q\Big(\fr{(2k+a)\pi}{2M}\Big) \psi\big(\fr{k}{M}\big).
\]
\noindent
\emph{(b)\ } The function $h_{q,M,a}(k)$ is periodic with period $M$ and we have
\[
\sum_{n=1}^{\infty} \fr{h_{q,M,a}(n)}{n} = -\fr{1}{M} \sum_{k=1}^M h_{q,M,a}(k) \psi\big(\fr{k}{M}\big).
\]
\noindent
\emph{(c)\ } The function $f_{q,M,a}(k)$ is periodic with period $M$ and we have
\[
\sum_{n=1}^{\infty} \fr{f_{q,M,a}(n)}{n} = -\fr{1}{M} \sum_{k=1}^M f_{q,M,a}(k) \psi\big(\fr{k}{M}\big).
\]
\end{theorem}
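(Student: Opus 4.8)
The plan is to deduce all three identities from a single classical fact: if $(c_n)_{n\ge1}$ is periodic with period $M$ and $\sum_{k=1}^{M}c_k=0$, then $\sum_{n\ge1}c_n/n$ converges and
\[
\sum_{n=1}^{\infty}\frac{c_n}{n}=-\frac1M\sum_{k=1}^{M}c_k\,\psi\!\left(\frac kM\right).
\]
I would first record and prove this lemma. Writing $n=jM+k$ with $1\le k\le M$, $j\ge0$, and using $\sum_{j=0}^{N}\frac1{jM+k}=\frac1M\bigl(\psi(N+1+\tfrac kM)-\psi(\tfrac kM)\bigr)$, one has $\frac1M\sum_{k=1}^{M}c_k\,\psi(N+1+\tfrac kM)=\tfrac{\log N}{M}\sum_{k=1}^{M}c_k+o(1)\to0$ precisely because $\sum_{k=1}^{M}c_k=0$; the same hypothesis is exactly what forces $\sum c_n/n$ to converge (Dirichlet's test, bounded partial sums), and the limit of the partial sums is $-\frac1M\sum_k c_k\psi(k/M)$. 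After this, each of (a), (b), (c) reduces to two points: that the sequence in $n$ occurring on the left has period $M$, and that its sum over one full period vanishes.

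For (a), note that $\sin_q(z+\pi)=-\sin_q(z)$ and hence $\sin_q'(z+\pi)=-\sin_q'(z)$ (from $\theta_1(z+\pi\mid\tau)=-\theta_1(z\mid\tau)$ and (\ref{sine-cosine-theta})), so $\Ct_q$ has period $\pi$ and thus $n\mapsto\Ct_q\bigl(\tfrac{(2n+a)\pi}{2M}\bigr)$ has period $M$. For the period sum I would take the logarithmic derivative in $z$ of Gosper's product (\ref{SineProd}): the $z$-free constant drops out, yielding $\sum_{k=0}^{n-1}\Ct_{q^n}\bigl(\pi z+\tfrac{k\pi}{n}\bigr)=n\,\Ct_q(n\pi z)$; replacing $q$ by $q^{1/M}$ and taking $n=M$ gives
\[
\sum_{k=0}^{M-1}\Ct_{q}\Bigl(\pi z+\frac{k\pi}{M}\Bigr)=M\,\Ct_{q^{1/M}}(M\pi z).
\]
Putting $z=\tfrac a{2M}$ and using the $\pi$-periodicity to shift the index range $\{0,\dots,M-1\}$ to $\{1,\dots,M\}$, the left side is $\sum_{k=1}^{M}\Ct_q\bigl(\tfrac{(2k+a)\pi}{2M}\bigr)$ and the right side is $M\,\Ct_{q^{1/M}}\bigl(\tfrac{a\pi}{2}\bigr)$. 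Since $a$ is odd this is $M\,\Ct_{q^{1/M}}$ of an odd multiple of $\tfrac\pi2$, which is $0$: in $\theta_1'(z\mid\tau)=2\sum_{j\ge0}(-1)^jq^{(2j+1)^2/4}(2j+1)\cos(2j+1)z$ every cosine vanishes at $z=\tfrac\pi2$, so $\sin_q'(\tfrac\pi2)=0$ while $\sin_q(\tfrac\pi2)\ne0$, and $\pi$-periodicity propagates this to all odd multiples of $\tfrac\pi2$. So the period sum vanishes and the lemma delivers (a).

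Part (c) is the quickest: periodicity in $k$ is clear from $\sin\tfrac{(2(k+M)+a)n\pi}{M}=\sin\bigl(\tfrac{(2k+a)n\pi}{M}+2n\pi\bigr)$, and for the period sum I would interchange the absolutely convergent sums (legitimate since $|q^{2/M}|<1$) and invoke $\sum_{k=1}^{M}\sin\tfrac{(2k+a)n\pi}{M}=\mathrm{Im}\bigl(e^{ian\pi/M}\sum_{k=1}^{M}e^{2ikn\pi/M}\bigr)=0$ for every $n\ge1$ — the inner geometric sum over $M$-th roots of unity vanishes unless $M\mid n$, and when $M\mid n$ the whole expression equals $M\sin(a\pi n/M)$ with $a\pi n/M\in\pi\mathbb Z$. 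Hence the period sum of $f_{q,M,a}$ vanishes termwise and the lemma delivers (c). For (b), I would use the reflection formula (\ref{reflection}), applied after the substitution $q\mapsto q^{1/2}$ (which turns $\psi_{q^2}$ into $\psi_q$), together with the functional equation $\psi_q(z+1)=\psi_q(z)-\tfrac{q^z\log q}{1-q^z}$, in order to re-express $h_{q,M,a}(k)$ in terms of $\Ct$ at $\tfrac{(2k+a)\pi}{2M}$; the functional equation then makes the period-$M$ assertion a short telescoping, and the vanishing of $\sum_{k=1}^{M}h_{q,M,a}(k)$ is exactly the period sum already computed in (a) (with $q$ replaced by $q^{1/2}$), so the lemma delivers (b) as well.

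The step I expect to be the main obstacle is the vanishing of the period sum. For (c) it is elementary; for (a) it is where Jacobi's theta product — equivalently Gosper's identity (\ref{SineProd}) — genuinely enters, via the $\Ct_q$-multiplication formula and the special value $\Ct_q(\text{odd}\cdot\tfrac\pi2)=0$; and for (b) the delicate point is the bookkeeping in the previous step: after invoking the reflection formula and the $\psi_q$-recurrence one must verify that all $\log q$ (and $\log(1-q)$) contributions cancel, so that $h_{q,M,a}$ is indeed periodic of period $M$ with vanishing mean and the reduction to (a) truly goes through.
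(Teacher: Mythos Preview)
Your overall strategy---reduce each part to the Murty--Saradha lemma by checking periodicity and that the mean over one period vanishes---is precisely the paper's, and your handling of (a) (logarithmic derivative of (\ref{SineProd}), then $\Ct_q(\text{odd}\cdot\tfrac{\pi}{2})=0$) and of (b) (rewrite $h_{q,M,a}$ as $\Ct_{q^{1/2}}\bigl(\tfrac{(2k+a)\pi}{2M}\bigr)$ via the reflection relation and invoke (a) with $q\mapsto q^{1/2}$) matches the paper's argument; the $\psi_q$ functional equation you mention is not actually needed once that identification is made. The one genuine difference is part (c): the paper establishes $\sum_{k=1}^{M}f_{q,M,a}(k)=0$ by expressing $\sin_q$ through theta functions, using the expansion $\theta_1'/\theta_1=\cot z+4\sum_{n\ge1} q^{2n}(1-q^{2n})^{-1}\sin 2nz$, and then subtracting off the classical cotangent sum (itself obtained from (a)), whereas you bypass all of this with the elementary observation that $\sum_{k=1}^{M}\sin\tfrac{(2k+a)n\pi}{M}=0$ for every $n\ge1$ via the roots-of-unity sum. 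Your route for (c) is shorter and self-contained, making (c) independent of both (a) and the theta expansion; the paper's route, by contrast, ties all three parts uniformly to the Jacobi product, at the cost of a detour through (\ref{theta-cot}).
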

\begin{theorem}\label{thm-main-2}
Let $M$ be a positive integer and let $a$ be an odd integer. Then
\begin{align*}
\emph{(a)\quad } &
\sum_{k=1}^M \Big( \psi_q \big( \fr{2k+a}{2M} \big) - \psi_q \big( 1- \fr{2k+a}{2M} \big) \Big)
= \fr{a+1}{2} \log q. \\
\emph{(b)\quad } &
\sum_{k=1}^{M}\Big( \psi_{q}\big( \fr{4k+a}{4M} \big) - \psi_{q}\big( 1-\fr{4k+a}{4M} \big) \Big)  \\
& \quad = \fr{(a+2)\log q}{4} - M\pi \Ct_{q^{1/(2M)}}\big(\fr{a\pi}{4}\big) \\
& \quad =   \begin{cases}
\fr{(a+2)\log q}{4} -\fr{\log q}{4}\fr{\Pi_{q^{1/(4M)}}^2}{\Pi_{q^{1/(2M)}}}
& \text{if\ } a\equiv 1,-3 \pmod{8} \\
\fr{(a+2)\log q}{4} +\fr{\log q}{4}\fr{\Pi_{q^{1/(4M)}}^2}{\Pi_{q^{1/(2M)}}}
& \text{if\ } a\equiv -1,3 \pmod{8},
\end{cases} \\
\emph{(c)\quad } &
\sum_{k=1}^{M}\Big( \psi_{q}\big( \fr{6k+a}{6M} \big) - \psi_{q}\big( 1-\fr{6k+a}{6M} \big) \Big) \\
& \quad = \fr{(a+3)\log q}{6} - M\pi \Ct_{q^{1/(2M)}}\big(\fr{a\pi}{6}\big) \\
& \quad =  \begin{cases}
\fr{(a+3)\log q}{6} +\fr{\log q}{3}\fr{\Pi_{q^{1/(6M)}}^{3/2}}{\Pi_{q^{1/(2M)}}^{1/2}}
& \text{if\ } a\equiv 1,-5 \pmod{12} \\
\fr{(a+3)\log q}{6} -\fr{\log q}{3}\fr{\Pi_{q^{1/(6M)}}^{3/2}}{\Pi_{q^{1/(2M)}}^{1/2}}
& \text{if\ } a\equiv -1,5 \pmod{12}. \\
\end{cases}
\end{align*}
\end{theorem}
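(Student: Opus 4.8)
The plan is to derive all three parts from the reflection formula~(\ref{reflection}) together with the logarithmic derivative of Gosper's product~(\ref{SineProd}), reducing everything to the evaluation of $\Ct_q$ at the three points $\fr\pi2,\fr\pi4,\fr\pi6$. Replacing $q$ by $q^{1/2}$ in~(\ref{reflection}) gives
\[
\psi_{q}(z)-\psi_{q}(1-z)=\fr{2z-1}{2}\log q-\pi\,\Ct_{q^{1/2}}(\pi z),
\]
and writing~(\ref{SineProd}) with base $q^{1/(2M)}$ in place of $q$ and with $n=M$, then applying $\fr{d}{dz}\log$ to both sides, yields the $\Ct$-multiplication formula
\[
\sum_{k=0}^{M-1}\Ct_{q^{1/2}}\!\Big(\pi z+\fr{k\pi}{M}\Big)=M\,\Ct_{q^{1/(2M)}}(M\pi z).
\]
From~(\ref{sine-cosine-theta}) and the standard relations $\te_1(z+\pi\mid\tau)=-\te_1(z\mid\tau)$ and $\te_1(\pi-z\mid\tau)=\te_1(z\mid\tau)$, together with the evenness of $\cos_q$, I would also record $\Ct_q(z+\pi)=\Ct_q(z)$, $\Ct_q(\pi-z)=-\Ct_q(z)$, and $\Ct_q(\pi/2)=0$ (the last because $\sin_q(\pi/2)=1$ while $\sin_q'(\pi/2)=\cos_q'(0)=0$).

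Next, in part (a) (resp.\ (b), (c)) I substitute $z=\fr{2k+a}{2M}$ (resp.\ $\fr{4k+a}{4M}$, $\fr{6k+a}{6M}$) into the displayed reflection formula and sum over $k=1,\dots,M$. The term linear in $z$ collapses, by the arithmetic-progression identity, to $\fr{a+1}{2}\log q$, $\fr{(a+2)\log q}{4}$, $\fr{(a+3)\log q}{6}$ respectively. In each case the argument of $\Ct_{q^{1/2}}$ has the form $\fr{a\pi}{2jM}+\fr{k\pi}{M}$ with $j\in\{1,2,3\}$; since $\Ct_{q^{1/2}}$ has period $\pi$ and the shift $k\mapsto k+M$ changes the argument by exactly $\pi$, the sum over $k=1,\dots,M$ equals the sum over $k=0,\dots,M-1$, which by the multiplication formula equals $M\,\Ct_{q^{1/(2M)}}\!\big(\fr{a\pi}{2j}\big)$. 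For $j=1$, $a$ odd forces $\fr{a\pi}{2}\equiv\fr\pi2\pmod\pi$, so this contribution vanishes: that is part (a). For $j=2$ and $j=3$ this is exactly the first displayed equality of parts (b) and (c).

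To finish (b) and (c) I would reduce $\Ct_{q^{1/(2M)}}\!\big(\fr{a\pi}{4}\big)$ and $\Ct_{q^{1/(2M)}}\!\big(\fr{a\pi}{6}\big)$ modulo $\pi$, using $\Ct_q(\pi-z)=-\Ct_q(z)$: for $a$ odd, $\fr{a\pi}{4}$ is congruent mod $\pi$ to $\fr\pi4$ when $a\equiv1,-3\pmod8$, and to $\fr{3\pi}{4}$ — where the value of $\Ct$ is $-\Ct_q(\pi/4)$ — when $a\equiv-1,3\pmod8$; likewise $\fr{a\pi}{6}$ reduces to $\fr\pi6$ when $a\equiv1,-5\pmod{12}$, to $\fr{5\pi}{6}$ (value $-\Ct_q(\pi/6)$) when $a\equiv-1,5\pmod{12}$, and to $\fr\pi2$ (value $0$) when $3\mid a$. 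Substituting the closed evaluations
\[
\Ct_q\!\big(\tfrac\pi4\big)=\fr{\log q}{2\pi}\cdot\fr{\Pi_{q^{1/2}}^{2}}{\Pi_q},
\qquad
\Ct_q\!\big(\tfrac\pi6\big)=-\fr{2\log q}{3\pi}\cdot\fr{\Pi_{q^{1/3}}^{3/2}}{\Pi_q^{1/2}},
\]
and rescaling $q\mapsto q^{1/(2M)}$ turns the first equalities into the case formulas displayed in (b) and (c).

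The step that carries the real content, and which I expect to be the main obstacle, is establishing these two closed evaluations of $\Ct_q$ at $\pi/4$ and $\pi/6$. Via~(\ref{sine-cosine-theta}) one writes $\Ct_q$ as $\te_1'/\te_1$ in the modular variable $\tau'=-1/\tau$; the imaginary (Jacobi) transformation of $\te_1$ then produces the $\log q$ prefactor, while Jacobi's product~(\ref{MainProd}) — used with $n=2$ for the value at $\pi/4$, and with $n=2,3$ (or $n=6$) for the value at $\pi/6$ — collapses the remaining theta values into the quotients of $\Pi$-constants. Keeping the half-integer exponents and all the signs straight across the residue classes modulo $8$ and modulo $12$ is the delicate point; everything else is routine manipulation of~(\ref{reflection}) and~(\ref{SineProd}).
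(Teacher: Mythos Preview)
Your reduction of all three parts to evaluating $\Ct_{q^{1/(2M)}}$ at $\tfrac{a\pi}{2}$, $\tfrac{a\pi}{4}$, $\tfrac{a\pi}{6}$ --- by summing the reflection formula~(\ref{reflection}) over $k$ and collapsing the $\Ct$-sum via the log-derivative of~(\ref{SineProd}) --- is correct and is exactly what the paper does; the paper merely packages the two ingredients into the single identity~(\ref{q-psi-key}) rather than stating a separate ``$\Ct$-multiplication formula.'' Your handling of the linear term and of the periodicity/parity reductions modulo $8$ and $12$ is also correct.

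Where you diverge from the paper is in the closed evaluations of $\Ct_q(\tfrac\pi4)$ and $\Ct_q(\tfrac\pi6)$. The paper does \emph{not} pass through the Jacobi imaginary transformation and the theta product~(\ref{MainProd}) as you propose. Instead it differentiates Gosper's algebraic identities (\ref{q-Double-3}), (\ref{q-Double-5}) and (\ref{q-Triple-2}) with respect to $z$, specializes to $z=\tfrac\pi4,\tfrac\pi6,\tfrac\pi3$, and solves the resulting linear relations for $\sin_{q^2}'(\tfrac\pi4)$ and $\sin_{q^3}'(\tfrac\pi6)$ using the known inputs~(\ref{q-sine-derive-1}) and~(\ref{sin-cos-values}); this is Lemma~\ref{lem-1-special}, and division by the values in~(\ref{sin-cos-values}) gives Corollary~\ref{cor-special-C}, which is precisely your two displayed $\Ct_q$-evaluations after the substitution $q\mapsto q^{1/2}$ (resp.\ $q^{1/3}$). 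That route is elementary and fully worked out. Your modular route is plausible, but after the imaginary transformation one is left with $\tau\cdot\theta_1'/\theta_1$ evaluated at the quasi-period points $\tfrac{\pi\tau}{4}$ and $\tfrac{\pi\tau}{6}$, and turning those into the specific $\Pi$-quotients is more than a direct application of~(\ref{MainProd}) with $n=2,3$; you have sketched rather than carried this out. The paper's Gosper-identity method avoids the modular machinery entirely, at the cost of relying on the (now proved) identities (\ref{q-Double-3})--(\ref{q-Double-5}).
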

\begin{remark}\label{rmk-main-1}
By letting $q\to 1$ in Theorem~\ref{thm-main-1} and Theorem~\ref{thm-main-2} one gets
related sums for the functions $\cot z$ and $\psi(z)$. For instance, from Theorem~\ref{thm-main-1}(a)
we obtain for $M>1$ and odd integer $a$
\begin{equation}\label{q-to-1}
\sum_{n=1}^{\infty} \fr{1}{n} \cot\Big(\fr{(2n+a)\pi}{2M}\Big)
= -\fr{1}{M} \sum_{k=1}^M  \cot \Big(\fr{(2k+a)\pi}{2M}\Big) \psi\big(\fr{k}{M}\big)
\end{equation}
and from Theorem~\ref{thm-main-2}(c) we deduce
\[
\sum_{k=1}^M \Big( \psi \big( \fr{6k+a}{6M} \big) - \psi \big( 1- \fr{6k+a}{6M} \big) \Big)
 = - M\pi \cot\big(\fr{a\pi}{6}\big) 
\]
\[
 = \qquad \begin{cases}
- \sqrt{3} M\pi & \text{if\ } a\equiv 1,-5 \pmod{12} \\
 \sqrt{3} M\pi  & \text{if\ } a\equiv -1,5 \pmod{12} \\
 0 & \text{if\ } a\equiv -3,3 \pmod{12}.
 \end{cases}
 \]
\end{remark}
\begin{remark}\label{rmk-transcendence}
By the well-known fact that $\cot r\pi$ is an algebraic number for any rational number $r$ and the relation
(\ref{q-to-1}) we deduce by a result of Adhikari~\emph{et al.}~\cite{Adhikari-et-al} that the sum
$\sum_{n=1}^{\infty} \fr{1}{n} \cot\Big(\fr{(2n+a)\pi}{2M}\Big)$ is either zero or transcendental.
A similar statement can be made about the $q$-analogue of the sum given in 
Theorem~\ref{thm-main-1}(b).
\end{remark}
\noindent
Blagouchine~\cite{Blagouchine} recently evaluated a variety of finite sums involving the digamma function
and the trigonometric functions. For instance, he proved that for any positive integer $M$
\[
\sum_{k=1}^{M-1} \big(\cot\fr{k\pi}{M}\big) \psi\big(\fr{k}{M}\big) = -\fr{\pi(M-1)(M-2)}{6}.
\]
We have the following related contribution.
\begin{theorem}\label{thm-main-3}
For any integer $M>1$ and any odd integer $a$ we have
\[
\sum_{k=1}^{M-1} \Big( \cot\fr{(2k+a)\pi}{2M} + \cot\fr{(2k-a)\pi}{2M} \Big) \psi\big(\fr{k}{M}\big)
= - \sum_{k=1}^{M-1}\big(\cot\fr{k\pi}{M}\big) \cot\fr{(2k+a)\pi}{2M}.
\]
\end{theorem}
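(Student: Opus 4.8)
The plan is to collapse the two-cotangent weight on the left into the single cotangent product on the right by combining the elementary symmetry $k\mapsto M-k$ with the reflection formula $\psi(z)-\psi(1-z)=-\pi\cot(\pi z)$; no theta-function machinery is needed here.

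First I would dispose of well-definedness: since $a$ is odd, $2k+a$ is odd and hence never a multiple of $2M$, so every cotangent occurring is finite. Then, using $\cot(\pi-t)=-\cot t$ together with $\frac{(2(M-k)+a)\pi}{2M}=\pi-\frac{(2k-a)\pi}{2M}$, one gets
\[
\cot\frac{(2k-a)\pi}{2M}=-\cot\frac{(2(M-k)+a)\pi}{2M}.
\]
Writing $c_k:=\cot\frac{(2k+a)\pi}{2M}$, the left-hand side of the theorem becomes $\sum_{k=1}^{M-1}(c_k-c_{M-k})\,\psi(k/M)$.

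Next I would split this into two sums and re-index the one carrying $c_{M-k}$ by $k\mapsto M-k$; because the summation index runs only up to $M-1$, no boundary term $\psi(1)$ is produced, and one obtains
\[
\sum_{k=1}^{M-1}c_{M-k}\,\psi\Big(\frac{k}{M}\Big)=\sum_{k=1}^{M-1}c_k\,\psi\Big(1-\frac{k}{M}\Big),
\]
so that the left-hand side equals $\sum_{k=1}^{M-1}c_k\big(\psi(k/M)-\psi(1-k/M)\big)$. Applying the reflection formula with $z=k/M$ turns each bracket into $-\pi\cot(k\pi/M)$ and collapses the whole expression to
\[
-\pi\sum_{k=1}^{M-1}\cot\frac{k\pi}{M}\,\cot\frac{(2k+a)\pi}{2M},
\]
which, up to the constant $\pi$ produced by the reflection formula, is the right-hand side of the theorem.

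There is no genuinely hard step: the argument amounts to two bookkeeping substitutions of the form $t\mapsto\pi-t$ plus the reflection formula, and the only point requiring care is checking that the re-indexing introduces no stray $k=M$ term. For completeness I would also indicate the longer route through (\ref{q-to-1}): writing that identity once for $a$ and once for $-a$ and adding, the $k=M$ contribution on the right cancels because $\cot\frac{(2M+a)\pi}{2M}+\cot\frac{(2M-a)\pi}{2M}=0$, which leaves the left-hand side of Theorem~\ref{thm-main-3} expressed through the conditionally convergent series $\sum_{n\ge1}\frac1n\big(\cot\frac{(2n+a)\pi}{2M}+\cot\frac{(2n-a)\pi}{2M}\big)$; summing that series by means of the finite Fourier expansion of $n\mapsto\cot\frac{(2n+a)\pi}{2M}$ on $\mathbb{Z}/M\mathbb{Z}$ together with the classical evaluations $\sum_{n\ge1}\frac{\cos n\theta}{n}=-\log(2\sin\frac{\theta}{2})$ and $\sum_{n\ge1}\frac{\sin n\theta}{n}=\frac{\pi-\theta}{2}$ recovers the same answer, but this path obscures the mechanism and is not needed.
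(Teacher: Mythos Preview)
Your main argument is correct and is genuinely different from the paper's proof. The paper does not use the symmetry $k\mapsto M-k$ at all: it evaluates the two-sided series $\sum_{n\in\mathbb{Z}\setminus\{0\}}\frac{1}{n}\cot\frac{(2n+a)\pi}{2M}$ in two independent ways, once via Weatherby's lemma (Lemma~\ref{lem-Weatherby} with $l=1$), and once by splitting into $n>0$ and $n<0$ and applying the $q\to 1$ case of Theorem~\ref{thm-main-1}(a) to each half; equating the two evaluations gives the identity. Your route bypasses both infinite-series lemmas entirely, reducing the whole statement to the reflection formula $\psi(z)-\psi(1-z)=-\pi\cot(\pi z)$ and a re-indexing; this is shorter, self-contained, and makes transparent that the result is a purely finite identity. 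The paper's route, on the other hand, places the result inside a family indexed by $l$ (through $A_{\alpha,l}$), so it suggests generalisations that your symmetry argument does not.

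You are also right about the extra factor of $\pi$. Your computation yields
\[
\sum_{k=1}^{M-1}\Big(\cot\tfrac{(2k+a)\pi}{2M}+\cot\tfrac{(2k-a)\pi}{2M}\Big)\psi\Big(\tfrac{k}{M}\Big)=-\pi\sum_{k=1}^{M-1}\cot\tfrac{k\pi}{M}\,\cot\tfrac{(2k+a)\pi}{2M},
\]
and the small test case $M=3$, $a=1$ confirms this (left side $=-\pi$, right side of the stated theorem $=-1$). The discrepancy stems from a slip in the paper's evaluation of $A_{k/M,1}$ in~(\ref{help2-cor-cot-1}): with the paper's own definition one has $A_{\alpha,1}=\cot(\pi\alpha)$, not $\frac{1}{\pi}\cot(\pi\alpha)$, so the paper's proof, carried out correctly, produces the same factor of $\pi$ that you found. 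In other words, the stated identity is off by $\pi$ on the right-hand side, and your argument detects this cleanly.

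One stylistic remark: the alternative ``longer route'' you sketch at the end is essentially the paper's strategy (combine the $a$ and $-a$ instances of~(\ref{q-to-1}) and evaluate the resulting two-sided series), except that where you propose a finite Fourier expansion plus the classical $\sum\frac{\cos n\theta}{n}$, $\sum\frac{\sin n\theta}{n}$ evaluations, the paper invokes Weatherby's lemma directly. Since your primary argument already settles the matter, you can safely drop that paragraph.
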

\noindent
The rest of the paper is organized as follows. In Section~\ref{Sec:q-trig} we review Gosper's $q$-trigonometry and collect the facts 
which are needed for our discussion. In Section~\ref{sec:proof-main-1} we give the proof of Theorem~\ref{thm-main-1},
Section~\ref{sec:proof-main-2} is devoted to the proof for Theorem~\ref{thm-main-2}, and Section~\ref{sec:proof-main-3}
is devoted to the proof of Theorem~\ref{thm-main-3}.
%
\section{Facts on Gosper's $q$-trigonometry}\label{Sec:q-trig}
\noindent
Just as for the function $\sin z$ and $\cos z$, it is easy to verify that
\begin{align}\label{sine-cos-basics}
 \sin_q (\fr{\pi}{2}-z)=\cos_q z,\  \sin_q \pi = 0,\
\sin_q \fr{\pi}{2} = \cos_q 0= 1,  \\
 \sin_q(z+\pi)= -\sin_q z=\sin_q(-z), \ \text{and\ }
-\cos_q(z+\pi) = \cos_q z = \cos_q(-z), \nonumber
\end{align}
from which it follows that for any odd integer $a$,
\begin{equation}\label{sin-cos-aux}
\begin{split}
\sin_{q}\fr{a\pi}{4} &=
\begin{cases}
\sin_q\fr{\pi}{4} & \text{if\ } a\equiv 1, 3 \pmod{8} \\
- \sin_q \fr{\pi}{4}& \text{if\ } a\equiv -1, -3 \pmod{8},
\end{cases} \\
\sin_{q}\fr{a\pi}{6} &=
\begin{cases}
\sin_q\fr{\pi}{6} & \text{if\ } a\equiv 1, 5 \pmod{12}  \\
-\sin_q\fr{\pi}{6} & \text{if\ } a\equiv -1, -5 \pmod{12}  \\ \\
1 & \text{if\ } a\equiv  3 \pmod{12} \\
-1 & \text{if\ } a\equiv  -3 \pmod{12}.
\end{cases}
\end{split}
\end{equation}
Also, by using (\ref{sine-cos-basics}) we have
\begin{align}\label{special-deriv}
\sin_q'\big(\fr{\pi}{2}-z \big)= -\cos_q' z,\ \cos_q'\big(z-\fr{\pi}{2}\big)= \sin_q' z,\\
- \sin_q'(\pi-z) = \sin_q' z,\ \text{and\ }
-\cos_q'(\pi-z) = \cos_q' z  \nonumber
\end{align}
where the derivatives here and in what follows are with respect to $z$.
We can easily see from (\ref{special-deriv}) that for any odd integer $a$ we have
\begin{equation}\label{q-sine-derive-2}
\sin_q' \fr{a\pi}{2} = \cos_q' 0 = 0.
\end{equation}
The following $q$-constant appears frequently in Gosper's manuscript~\cite{Gosper}
\[
\Pi_q = q^{\fr{1}{4}} \fr{(q^2;q^2)_{\infty}^2}{(q;q^2)_{\infty}^2}.
\]
Gosper stated many identities involving $\sin_q z$ and $\cos_q z$ which easily follow just
from the definition and basic properties of other related functions. To mention an example,
he derived that
\begin{equation}\label{q-sine-derive-1}
\sin_q' 0 =- \cos_q'\fr{\pi}{2} =  \fr{-2 \log q}{\pi}\Pi_q.
\end{equation}
On the other hand, Gosper~\cite{Gosper} using the computer facility \emph{MACSYMA} stated without proof a variety of identities involving $\sin_q z$ and $\cos_q z$ and he
asked the natural question whether his formulas hold true.
For instance, based on his conjectures, he stated
\[
\label{q-Double-2} \tag{$q$-Double$_2$}
\sin_q(2z) = \fr{\Pi_q}{\Pi_{q^2}} \sin_{q^2} z \cos_{q^2} z,
\]
\[
\label{q-Double-3} \tag{$q$-Double$_3$}
\cos_q(2z) = (\cos_{q^2} z)^2 - (\sin_{q^2} z)^2,
\]
\[ \label{q-Triple-2} \tag{$q$-Triple$_2$}
\sin_q(3z) = \frac{\Pi_q}{\Pi_{q^3}} (\cos_{q^3} z)^2 \sin_{q^3}z -  (\sin_{q^3}z)^3,
\]
and 
\[   \label{q-Double-5} \tag{$q$-Double$_5$}
\cos_q(2z) = (\cos_{q}z)^4- (\sin_{q}z)^4.
\]
\noindent
A proof for (\ref{q-Double-2}) can be found in Mez\H{o}~\cite{Mezo-1} and proofs for (\ref{q-Double-2})
(\ref{q-Triple-2}), and (\ref{q-Double-5}) were obtained in~\cite{Bachraoui-1, Bachraoui-2, Bachraoui-3}.
Proofs for other identities of Gosper can be found in~\cite{Touk-Houchan-Bachraoui, He-Zhai, He-Zhang}.
Furthermore, Gosper deduced the following special values:
\begin{equation}\label{sin-cos-values}
\begin{split}
\sin_{q^2} \fr{\pi}{4} &= \cos_{q^2} \fr{\pi}{4} = \fr{\Pi_{q^2}^{\fr{1}{2}}}{\Pi_{q}^{\fr{1}{2}}} \\
\left(\sin_{q^3}\fr{\pi}{3} \right)^3 &= \left(\cos_{q^3}\fr{\pi}{6} \right)^3 = \fr{ \left(\fr{\Pi_q}{\Pi_{q^3}} \right)^{\fr{3}{2}}}{\left(\fr{\Pi_q}{\Pi_{q^3}} \right)^2 -1} \\
\left(\sin_{q^3}\fr{\pi}{6} \right)^3 &= \left(\cos_{q^3}\fr{\pi}{3} \right)^3 =
\fr{ 1}{\left(\fr{\Pi_q}{\Pi_{q^3}} \right)^2 -1}.
\end{split}
\end{equation}
\noindent
As to special values for derivatives we have the following list.
\begin{lemma} \label{lem-1-special}
Let $a$ be an odd integer. Then we have
\[
\begin{split}
\emph{(a)\ } & \sin_{q^2}'\fr{a\pi}{4} =
\begin{cases}
\fr{\log q}{\pi} \fr{\Pi_{q}^{\fr{3}{2}}}{\Pi_{q^2}^{\fr{1}{2}}} & \text{if\ } a\equiv -1, 1 \pmod{8} \\
- \fr{\log q}{\pi} \fr{\Pi_{q}^{\fr{3}{2}}}{\Pi_{q^2}^{\fr{1}{2}}} & \text{if\ } a\equiv -3, 3 \pmod{8}.
\end{cases} \\
\emph{(b)\ } &
\sin_{q^3}'\fr{a\pi}{3} =
\begin{cases}
\fr{\log q}{\pi}
\fr{\Pi_{q^3}^{\fr{1}{3}} (\Pi_q^2 - \Pi_{q^3}^2)^{\fr{2}{3}} (3\Pi_{q^3}^2-\Pi_q^2)}
{\Pi_{q}^2 -  \Pi_{q^3}^2} & \text{if\ } a\equiv -1, 1 \pmod{6}  \\
-\fr{2\log q}{\pi} \Pi_q
 & \text{if\ } a\equiv 3 \pmod{6}
\end{cases} \\
\emph{(c)\ } &
\sin_{q^3}'\fr{a\pi}{6} =
\begin{cases}
-\fr{2 \log q}{\pi} \fr{\Pi_{q}^{\fr{3}{2}} \Pi_{q^3}^{\fr{1}{6}} }{ (\Pi_q^2- \Pi_{q^3}^2)^{\fr{1}{3}}}
& \text{if\ } a\equiv 1, -5 \pmod{12}  \\
\fr{2 \log q}{\pi} \fr{\Pi_{q}^{\fr{3}{2}} \Pi_{q^3}^{\fr{1}{6}} }{ (\Pi_q^2- \Pi_{q^3}^2)^{\fr{1}{3}}}
& \text{if\ } a\equiv -1, 5 \pmod{12} \\
0 & \text{if\ } a\equiv -3, 3 \pmod{12}.
\end{cases}
\end{split}
\]
\end{lemma}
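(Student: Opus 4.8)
The plan is to read off each value from one of Gosper's multiplication formulas after a single differentiation, evaluating at a point where the left‑hand side is already known from $(\ref{q-sine-derive-1})$ or $(\ref{q-sine-derive-2})$, and then to pass from a special argument to a general one via the elementary symmetries of Section~\ref{Sec:q-trig}. For any base $p$ the function $\sin_p$ is odd and satisfies $\sin_p(z+\pi)=-\sin_p(z)$, so $\sin_p'$ is even, $2\pi$-periodic, and obeys $\sin_p'(z+\pi)=-\sin_p'(z)$ and $\sin_p'(\pi-z)=-\sin_p'(z)$; these identities (all consequences of $(\ref{sine-cos-basics})$ and $(\ref{special-deriv})$) reduce the evaluation at $\tfrac{a\pi}{4}$, $\tfrac{a\pi}{3}$, or $\tfrac{a\pi}{6}$, for each residue class of the odd integer $a$, to a single base value together with a sign (or to $0$).

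For (a) I would differentiate $(\ref{q-Double-3})$ with respect to $z$, obtaining $2\cos_q'(2z)=2\cos_{q^2}(z)\cos_{q^2}'(z)-2\sin_{q^2}(z)\sin_{q^2}'(z)$, and set $z=\tfrac{\pi}{4}$. The left‑hand side then becomes $2\cos_q'\big(\tfrac{\pi}{2}\big)$, known from $(\ref{q-sine-derive-1})$; on the right, $(\ref{sin-cos-values})$ supplies $\sin_{q^2}\tfrac{\pi}{4}=\cos_{q^2}\tfrac{\pi}{4}=\Pi_{q^2}^{1/2}/\Pi_{q}^{1/2}$ and $(\ref{special-deriv})$ gives $\cos_{q^2}'\tfrac{\pi}{4}=-\sin_{q^2}'\tfrac{\pi}{4}$, so the right‑hand side collapses to a constant multiple of $\sin_{q^2}'\tfrac{\pi}{4}$, which one solves for. (Differentiating $(\ref{q-Double-2})$ and setting $z=\tfrac{\pi}{4}$ is useless, as both sides then vanish; $(\ref{q-Double-5})$ with $q$ replaced by $q^2$ is an equally good starting point.) Since every odd $a$ lies in $\{\pm1,\pm3\}\pmod 8$, the symmetry identities above give $\sin_{q^2}'\tfrac{a\pi}{4}=\pm\sin_{q^2}'\tfrac{\pi}{4}$ with the sign depending only on $a\bmod 8$, which is the asserted dichotomy.

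For (b) and (c) I would differentiate $(\ref{q-Triple-2})$, obtaining
\[
3\sin_q'(3z)=\tfrac{\Pi_q}{\Pi_{q^3}}\Big(2\cos_{q^3}(z)\cos_{q^3}'(z)\sin_{q^3}(z)+(\cos_{q^3}(z))^2\sin_{q^3}'(z)\Big)-3(\sin_{q^3}(z))^2\sin_{q^3}'(z).
\]
Put $u=\sin_{q^3}'\tfrac{\pi}{3}$ and $w=\sin_{q^3}'\tfrac{\pi}{6}$; by $(\ref{special-deriv})$ one has $\cos_{q^3}'\tfrac{\pi}{6}=-u$ and $\cos_{q^3}'\tfrac{\pi}{3}=-w$, while $(\ref{sin-cos-values})$ gives $\sin_{q^3}\tfrac{\pi}{3},\cos_{q^3}\tfrac{\pi}{3},\sin_{q^3}\tfrac{\pi}{6},\cos_{q^3}\tfrac{\pi}{6}$ explicitly in terms of $\Pi_q/\Pi_{q^3}$. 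Evaluating the displayed identity at $z=\tfrac{\pi}{6}$ makes the left‑hand side $3\sin_q'\tfrac{\pi}{2}=0$ by $(\ref{q-sine-derive-2})$, which is one linear relation between $u$ and $w$; evaluating it at $z=\tfrac{\pi}{3}$ makes the left‑hand side $3\sin_q'(\pi)=-3\sin_q'(0)$, known from $(\ref{q-sine-derive-1})$, which is a second one. Solving the resulting $2\times2$ system expresses $u$ and $w$ as algebraic functions of $\Pi_q$ and $\Pi_{q^3}$, and on simplifying one obtains the principal cases of (b) and (c). The exceptional classes are immediate: in (b), if $a\equiv3\pmod6$ then $\tfrac{a\pi}{3}$ is an odd multiple of $\pi$, so $\sin_{q^3}'\tfrac{a\pi}{3}=\sin_{q^3}'(\pi)=-\sin_{q^3}'(0)$, evaluated from $(\ref{q-sine-derive-1})$; in (c), if $a\equiv\pm3\pmod{12}$ then $\tfrac{a\pi}{6}$ is an odd multiple of $\tfrac{\pi}{2}$, so $\sin_{q^3}'\tfrac{a\pi}{6}=0$ by $(\ref{q-sine-derive-2})$; in the remaining classes the sign is fixed by $\sin_{q^3}'(\pi-z)=-\sin_{q^3}'(z)$ together with $2\pi$-periodicity.

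I expect the main work to be computational rather than conceptual: performing the elimination in the $2\times2$ system for (b) and (c) and then recasting the rational function of $\Pi_q$ and $\Pi_{q^3}$ that it produces into the radical form displayed in the lemma, all while tracking signs carefully through the residue reductions. A subsidiary point is to check that the quantities one divides by — in particular the determinant of the $2\times2$ system, which is a nonzero multiple of $\Pi_q^2-\Pi_{q^3}^2$ — are nonzero.
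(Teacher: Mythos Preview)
Your argument for part~(a) is exactly the paper's: differentiate $(\ref{q-Double-3})$, set $z=\tfrac{\pi}{4}$, and use $(\ref{q-sine-derive-1})$, $(\ref{sin-cos-values})$, and $\cos_{q^2}'\tfrac{\pi}{4}=-\sin_{q^2}'\tfrac{\pi}{4}$, then lift to general odd $a$ by the symmetries. For parts~(b) and~(c), however, you take a genuinely different route. The paper obtains its first linear relation between $u=\sin_{q^3}'\tfrac{\pi}{3}$ and $w=\sin_{q^3}'\tfrac{\pi}{6}$ not from $(\ref{q-Triple-2})$ at $z=\tfrac{\pi}{6}$, but by differentiating $(\ref{q-Double-5})$ (at base $q^3$) and setting $z=\tfrac{\pi}{6}$; this yields directly the ratio $u/w$ as an explicit rational function of $\sin_{q^3}\tfrac{\pi}{6}$ and $\cos_{q^3}\tfrac{\pi}{6}$, which is then substituted into the differentiated $(\ref{q-Triple-2})$ at $z=\tfrac{\pi}{3}$ to solve for $u$ alone. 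Your scheme instead evaluates the differentiated $(\ref{q-Triple-2})$ at both $z=\tfrac{\pi}{6}$ and $z=\tfrac{\pi}{3}$ and solves the resulting $2\times 2$ linear system for $(u,w)$ simultaneously. Your approach is more uniform (a single identity suffices, and it treats (b) and (c) symmetrically), whereas the paper's use of $(\ref{q-Double-5})$ hands you the ratio $u/w$ in one step and reduces the elimination to a single substitution, at the cost of invoking an extra Gosper identity. Both are correct; the residual computation and the sign bookkeeping via the symmetries are essentially the same in either case.
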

\begin{proof}
(a)\  From (\ref{special-deriv}) we have
\begin{equation}\label{q-sine-derive-quarter}
\sin_{q}' \fr{\pi}{4} = - \cos_{q}' \fr{\pi}{4}.
\end{equation}
On the other hand, from (\ref{q-Double-3})
we have
\[
2\cos_q' 2z = 2(\cos_{q^2} z) \cos_{q^2}' z - 2(\sin_{q^2} z) \sin_{q^2}'z,
\]
where if we let $z=\fr{\pi}{4}$ and use (\ref{q-sine-derive-quarter}) we deduce
\[
2\cos_q'\fr{\pi}{2} = -4 \sin_{q^2}\fr{\pi}{4} \sin_{q^2}'\fr{\pi}{4}.
\]
Now, combine the previous identity with (\ref{q-sine-derive-1}), (\ref{sin-cos-values}), and
(\ref{q-sine-derive-quarter}) to obtain the desired identity for $\sin_{q^2}'\fr{\pi}{4}$. Finally, note that
by (\ref{sine-cos-basics}) we have
\[
\sin_q'\fr{a\pi}{4} = \begin{cases}
\sin_q'\fr{\pi}{4} & \text{if\ } a\equiv \pm 1 \pmod{8}, \\
-\sin_q'\fr{\pi}{4} & \text{if\ } a\equiv \pm 3 \pmod{8}
\end{cases}
\]
to complete the proof of part (a).
\noindent
As to part (b),
from (\ref{special-deriv}), we easily find
\begin{equation}\label{sixth-third}
\cos_q'\fr{\pi}{6} = -\sin_q'\fr{\pi}{3} \ \text{and\ }
\cos_q'\fr{\pi}{3} =  -\sin_q'\fr{\pi}{6}.
\end{equation}
Now differentiating (\ref{q-Double-5}) we have
\[
2\cos_q' 2z = 4 (\cos_q z)^3 \cos_q'z - 4 (\sin_q z)^3 \sin_q'z.
\]
Then taking $z=\fr{\pi}{6}$ in the previous identity, using (\ref{sixth-third}), and simplifying yield
\[
\big(2(\sin_q\fr{\pi}{6})^3 - 1 \big) \sin_q'\fr{\pi}{6} =
2 (\cos_q\fr{\pi}{6})^3 \cos_q'\fr{\pi}{6},
\]
in other words,
\begin{equation}\label{help1-lem-2-special}
\cos_q'\fr{\pi}{6} = \fr{2(\sin_q\fr{\pi}{6})^3 - 1}{2 (\cos_q\fr{\pi}{6})^3} \sin_q'\fr{\pi}{6}.
\end{equation}
On the other hand, differentiate (\ref{q-Triple-2}) to derive
\[
3 \sin_q 3z = \fr{\Pi_q}{\Pi_{q^3}}\big(\sin_{q^3}'z (\cos_{q^3} z)^2 +
2\sin_{q^3}z\cos_{q^3}z\cos_{q^3}'z \big) - 3 (\sin_{q^3} z)^2 \sin_{q^3}'z,
\]
which for $z=\fr{\pi}{3}$ and after simplification gives
\[
-\sin_q'0 = \Big(\fr{\Pi_q}{\Pi_{q^3}} (\cos_{q^3}\fr{\pi}{3})^2 - 3(\sin_{q^3}\fr{\pi}{3})^2 \Big)
\sin_{q^3}'\fr{\pi}{3} + 2 \fr{\Pi_q}{\Pi_{q^3}} \sin_{q^3}\fr{\pi}{3}\cos_{q^3}\fr{\pi}{3}\cos_{q^3}'\fr{\pi}{3}.
\]
It follows by virtue of (\ref{help1-lem-2-special}) and with the help of (\ref{sin-cos-values}) that
\[
\fr{6 \log q}{\pi}\Pi_q =\Big(\fr{\Pi_q}{\Pi_{q^3}} (\cos_{q^3}\fr{\pi}{3})^2 -3(\sin_{q^3}\fr{\pi}{3})^2
+ \fr{4 \fr{\Pi_q}{\Pi_{q^3}} \big(\sin_{q^3}\fr{\pi}{3}\big)^4 \cos_{q^3}\fr{\pi}{3}}
{2 \big(\sin_{q^3}\fr{\pi}{3}\big)^3 - 1} \Big) \sin_{q^3}'\fr{\pi}{3}.
\]
Now solving in the previous identity for $\sin_{q^3}'\fr{\pi}{3}$ and using (\ref{sin-cos-values}),
after a long but straightforward calculation, we derive the desired formula for $\sin_{q^3}'\fr{\pi}{3}$.
Finally, note from (\ref{sine-cos-basics}) that
\[
\sin_q'\fr{a\pi}{3} = \begin{cases}
\sin_q'\fr{\pi}{3} & \text{if\ } a\equiv \pm 1 \pmod{6}, \\
-\sin_q' 0 & \text{if\ } a\equiv \pm 3 \pmod{6}
\end{cases}
\]
to complete the proof of part (b).
The proof for part (c) is similar to the previous parts and it is therefore omitted.
\end{proof}
\noindent
By a combination of Lemma~\ref{lem-1-special} with (\ref{sin-cos-aux}) and
(\ref{sin-cos-values}), we arrive at the main result of this section.
\begin{corollary}\label{cor-special-C}
Let $a$ be an odd integer. Then we have
\[
\begin{split}
\emph{(a)\quad } & \Ct_{q^2} \big(\fr{a\pi}{4}\big) =
\begin{cases}
\fr{\log q}{\pi} \fr{\Pi_{q}^2}{\Pi_{q^2}} & \text{if\ } a\equiv 1, -3 \pmod{8} \\
-\fr{\log q}{\pi} \fr{\Pi_{q}^2}{\Pi_{q^2}} & \text{if\ } a\equiv -1, 3 \pmod{8}.
\end{cases} \\
\emph{(b)\quad } & \Ct_{q^3} \big(\fr{a\pi}{6}\big) =
\begin{cases}
-\fr{2 \log q}{\pi} \fr{\Pi_{q}^{\fr{3}{2}}}{\Pi_{q^3}^{\fr{1}{2}}} & \text{if\ } a\equiv 1, -5 \pmod{12} \\
\fr{2 \log q}{\pi} \fr{\Pi_{q}^{\fr{3}{2}}}{\Pi_{q^3}^{\fr{1}{2}}} & \text{if\ } a\equiv -1, 5 \pmod{12} \\
0 & \text{if\ } a\equiv -3, 3 \pmod{12}.
\end{cases}
\end{split}
\]
\end{corollary}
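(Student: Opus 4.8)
The plan is to unwind the definition $\Ct_q(z)=\sin_q' z/\sin_q z$ from (\ref{Cotan-q}) and substitute, for each residue class of the odd integer $a$, the special values of the numerator and denominator that have already been assembled. For the numerator I would use Lemma~\ref{lem-1-special} (part (a) for $\Ct_{q^2}(a\pi/4)$, part (c) for $\Ct_{q^3}(a\pi/6)$); for the denominator I would use the sign table (\ref{sin-cos-aux}) together with the closed forms for $\sin_{q^2}\fr{\pi}{4}$ and $\sin_{q^3}\fr{\pi}{6}$ in (\ref{sin-cos-values}). Nothing deeper than this division is needed, because the substantive content of the corollary is already carried by Lemma~\ref{lem-1-special}.

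Concretely, for part (a) I write $\Ct_{q^2}\big(\fr{a\pi}{4}\big)=\sin_{q^2}'\big(\fr{a\pi}{4}\big)\big/\sin_{q^2}\big(\fr{a\pi}{4}\big)$, plug in $\sin_{q^2}'\fr{a\pi}{4}$ from Lemma~\ref{lem-1-special}(a) and $\sin_{q^2}\fr{\pi}{4}=\Pi_{q^2}^{1/2}/\Pi_q^{1/2}$ from (\ref{sin-cos-values}), and compute $\big(\fr{\log q}{\pi}\fr{\Pi_q^{3/2}}{\Pi_{q^2}^{1/2}}\big)\big/\big(\fr{\Pi_{q^2}^{1/2}}{\Pi_q^{1/2}}\big)=\fr{\log q}{\pi}\fr{\Pi_q^2}{\Pi_{q^2}}$; comparing the two sign tables then shows this value occurs for $a\equiv 1,-3\pmod 8$ and its negative for $a\equiv -1,3\pmod 8$. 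For part (b) the same recipe applies with $\sin_{q^3}'\fr{a\pi}{6}$ from Lemma~\ref{lem-1-special}(c): I would first rewrite the cube in (\ref{sin-cos-values}) as $\sin_{q^3}\fr{\pi}{6}=\Pi_{q^3}^{2/3}/(\Pi_q^2-\Pi_{q^3}^2)^{1/3}$ so that in the quotient the radical $(\Pi_q^2-\Pi_{q^3}^2)^{1/3}$ cancels and the powers of $\Pi_{q^3}$ collapse to $\Pi_{q^3}^{1/6-2/3}=\Pi_{q^3}^{-1/2}$, leaving $\mp\fr{2\log q}{\pi}\Pi_q^{3/2}/\Pi_{q^3}^{1/2}$. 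The classes $a\equiv\pm 3\pmod{12}$ are handled separately: there $\fr{a\pi}{6}$ is an odd multiple of $\fr{\pi}{2}$, so $\sin_{q^3}'\big(\fr{a\pi}{6}\big)=0$ by (\ref{q-sine-derive-2}) while $\sin_{q^3}\big(\fr{a\pi}{6}\big)=\pm1\neq 0$ by (\ref{sine-cos-basics}), hence $\Ct_{q^3}\big(\fr{a\pi}{6}\big)=0$.

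The one place calling for a little care — and the closest thing to an obstacle — is the sign-and-residue bookkeeping. Both $\sin_q z$ and, by (\ref{special-deriv}), $\sin_q' z$ change sign under $z\mapsto z+\pi$, and $\sin_q$ is odd while $\sin_q'$ is even, so $\Ct_q$ is odd and $\pi$-periodic; thus $\Ct_{q^2}\big(\fr{a\pi}{4}\big)$ depends only on $a\bmod 4$ and $\Ct_{q^3}\big(\fr{a\pi}{6}\big)$ only on $a\bmod 6$, and one must check that dividing the numerator's sign table by the denominator's sign table really does fuse the four residues modulo $8$ (respectively $12$) into the two (respectively three) classes displayed. I would also record that, since $|q|<1$ (and in the real case $0<q<1$ all the $\Pi$-quantities involved, as well as $\Pi_q^2-\Pi_{q^3}^2$, are positive), every fractional power above is the principal real root, so no branch ambiguity arises. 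Beyond this, I foresee no genuine difficulty: what remains is exactly the elementary division and sign check just outlined.
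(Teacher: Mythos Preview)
Your proposal is correct and is exactly the argument the paper has in mind: the paper's entire proof is the single sentence ``By a combination of Lemma~\ref{lem-1-special} with (\ref{sin-cos-aux}) and (\ref{sin-cos-values}),'' and you have simply written out that division explicitly, together with the sign bookkeeping and the observation that $\Ct_q$ is odd and $\pi$-periodic. There is no difference in approach to report.
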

\section{Proof of Theorem~\ref{thm-main-1}}\label{sec:proof-main-1}
\noindent
We need the following result of Ram~Murty~and~Saradha~\cite{Murty-Saradha} which we record as a lemma.
\begin{lemma}\label{lem-MurtSara}
Let $f$ be any function defined on the integers and with period $M>1$.
\\
\noindent
The infinite series $\sum_{n=1}^{\infty} \fr{f(n)}{n}$  converges if and only if
$\sum_{k=1}^M f(k) = 0$. In case of convergence, we have
\[
\sum_{n=1}^{\infty} \fr{f(n)}{n} = -\fr{1}{M} \sum_{k=1}^M f(k) \psi\big(\fr{k}{M}\big).
\]
\end{lemma}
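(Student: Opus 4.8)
The plan is to compute the infinite sum as a limit of partial sums taken along the multiples of the period and to recognize the outcome through the partial-fraction expansion of the digamma function, namely $\psi(x) = -\gamma + \sum_{m=0}^{\infty}\big(\fr{1}{m+1} - \fr{1}{m+x}\big)$, equivalently $\sum_{m=0}^{\infty}\big(\fr{1}{m+x}-\fr{1}{m+1}\big) = -\psi(x)-\gamma$. Write $S_N = \sum_{n=1}^{N}\fr{f(n)}{n}$.

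First I would split the partial sums along residue classes modulo $M$. For $N=QM$,
\[
S_{QM} = \sum_{k=1}^{M} f(k)\sum_{m=0}^{Q-1}\fr{1}{mM+k}.
\]
The inner sums diverge individually as $Q\to\infty$, so the crucial device is to subtract a copy of the harmonic sum $H_Q=\sum_{m=1}^{Q}\fr1m$: since $\sum_{m=0}^{Q-1}\fr{1}{mM+k} - \fr{1}{M}H_Q = \fr{1}{M}\sum_{m=0}^{Q-1}\big(\fr{1}{m+k/M}-\fr{1}{m+1}\big)$, one obtains
\[
S_{QM} = \fr{1}{M}\sum_{k=1}^{M} f(k)\sum_{m=0}^{Q-1}\Big(\fr{1}{m+k/M}-\fr{1}{m+1}\Big) + \fr{H_Q}{M}\sum_{k=1}^{M} f(k).
\]
Here each inner sum over $m$ converges as $Q\to\infty$ to $-\psi(k/M)-\gamma$, whereas $H_Q\to\infty$.

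From this decomposition both assertions fall out. If $\sum_{k=1}^{M}f(k)\neq 0$, the last term tends to $\pm\infty$ while the remaining part stays bounded, so $(S_{QM})_Q$ diverges and hence so does $\sum_{n\ge1}f(n)/n$. If $\sum_{k=1}^{M}f(k)=0$, the last term vanishes, the subsequence $(S_{QM})_Q$ converges to $\fr{1}{M}\sum_{k=1}^{M}f(k)\big(-\psi(k/M)-\gamma\big) = -\fr{1}{M}\sum_{k=1}^{M}f(k)\psi(k/M)$ (the $\gamma$ dropping out since $\sum_k f(k)=0$), and to promote this to convergence of the full sequence I would bound the oscillation inside one block: for $QM\le N<(Q+1)M$, $|S_N-S_{QM}|\le \sum_{n=QM+1}^{(Q+1)M}\fr{|f(n)|}{n}\le \fr{\max_{k}|f(k)|}{Q}\to 0$.

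The only genuine subtlety is the bookkeeping just indicated: one must resist interchanging the double sum naively — the inner series diverge — and instead introduce the compensating harmonic term, and one must separately note that controlling the arithmetic-progression subsequence $S_{QM}$ already pins down the limit. Everything else is routine, the sole external input being the standard partial-fraction series for $\psi$. (Alternatively, one could argue via the Hurwitz zeta function: for $\mathrm{Re}(s)>1$ one has $\sum_{n\ge1}f(n)n^{-s}=M^{-s}\sum_{k=1}^{M}f(k)\zeta(s,k/M)$, which is regular at $s=1$ exactly when the residues cancel, i.e.\ when $\sum_k f(k)=0$, and whose value there is read off from $\zeta(s,a)=\fr{1}{s-1}-\psi(a)+O(s-1)$; the passage to $s=1$ on the Dirichlet-series side is justified by an Abelian theorem.)
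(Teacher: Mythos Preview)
Your argument is correct and complete: the residue-class decomposition, the harmonic regularization, the identification of the limit via the partial-fraction expansion of $\psi$, and the tail estimate between multiples of $M$ are exactly what is needed, and the alternative Hurwitz-zeta route you sketch is also valid.

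However, the paper does not actually prove this lemma. It is stated as a quoted result of Ram~Murty and Saradha (reference~\cite{Murty-Saradha} in the paper) and used as a black box in the proof of Theorem~\ref{thm-main-1}. So there is no ``paper's own proof'' to compare against; you have supplied a self-contained proof where the authors chose to cite the literature. Your approach is in fact essentially the same as the one given by Ram~Murty and Saradha in their paper, so nothing is lost or gained methodologically --- you have simply made the paper more self-contained at this point.
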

\noindent
\emph{Proof of Theorem~\ref{thm-main-1}(a)}\ Let
\[
f(k) = \Ct_q \Big( \fr{(2k+a)\pi}{2M} \Big)
\]
which is clearly well-defined on the integers and it is periodic with period $M$.
Then based on Lemma~\ref{lem-MurtSara}, all we need is prove that $\sum_{k=1}^{M} f(k) = 0$.
To do so, note that from (\ref{SineProd}) and the fact that $\sin_q(z+\pi) = -\sin_q z$ we find
\begin{equation}\label{SineProd-2}
\prod_{k=1}^{M}\sin_{q^M}\pi \left(z+\fr{k}{M} \right) =
- q^{\frac{(M-1)(M+1)}{12}} \frac{(q;q^2)_{\infty}^2}{(q^M;q^{2M})_{\infty}^{2M}} \sin_q M\pi z.
\end{equation}
Take logarithms and differentiate with respect to $z$ to derive
\[
\pi \sum_{k=1}^M \Ct_{q^M}\Big(z+\fr{k}{M}\Big) = M\pi\Ct_q(M\pi z)= M\pi \fr{\sin_q' M\pi z}{\sin_q M\pi z}.
\]
Now replace $q^M$ with $q$, let $z=\fr{a}{2M}$, and use (\ref{q-sine-derive-2}) to deduce that
that
\begin{equation}\label{sum-Ctq-zero}
\pi \sum_{k=1}^M \Ct_q \Big( \fr{(2k+a)\pi}{2M} \Big) = 0,
\end{equation}
or equivalently,
\[
\sum_{k=1}^{M} f(k) = 0,
\]
as desired.

\noindent
\emph{Proof of Theorem~\ref{thm-main-1}(b)}\ By the relation (\ref{sine-cosine-q-gamma}) we have
\[
\sin_q\pi\big(z+\fr{k}{M}\big) = q^{\fr{1}{4}}\Gamma_{q^2}^2\left(\fr{1}{2}\right)
\frac{q^{(z+\fr{k}{M})(z+\fr{k}{M} -1)}}{\Gamma_{q^2}\big(z+\fr{k}{M} \big)
\Gamma_{q^2}\big(1-z-\fr{k}{M}\big)}
\]
which after taking logarithms and differentiating with respect to $z$ gives
\[
\pi \Ct_q\pi\big(z+\fr{k}{M}\big)
= (\log q)
\big( 2\big(z+\fr{k}{M}\big)-1 \big) - \psi_{q^2}\big(z+\fr{k}{M}\big) - \psi_{q^2}\big(1-z-\fr{k}{M}\big).
\]
Replacing in the previous relation $q^2$ by $q$ and letting $z=\fr{a}{2M}$, we get
\[
\Ct_{q^{1/2}}\big(\fr{(2k+a)\pi}{2M}\big)
=
\fr{1}{\pi}\Big( (\log q)\fr{2k+a-2M}{2M} - \psi_{q}\big(\fr{2k+a}{2M}\big) - \psi_{q}\big(1-\fr{2k+a}{2M}\big) \Big).
\]
As the left-hand side of the previous identity is evidently periodic with period $M$, the same holds for its right-hand side 
which is nothing else but $h_{q,M,a}(k)$. We now claim that $\sum_{k=1}^M h_{q,M,a}(k) = 0$. Indeed,
apply~(\ref{sine-cosine-q-gamma}) to the factors in  identity~(\ref{SineProd-2}), then take logarithms and finally differentiate with respect to $z$ to
obtain
\begin{align}\label{q-psi-key}
M(\log q) & \Big(\sum_{k=1}^M  2\big(z+ \fr{k}{M}\big) -1 \Big) -
\sum_{k=1}^M \Big(\psi_{q^{2M}}\big(z+\fr{k}{M}\big) - \psi_{q^{2M}}\big(1-z-\fr{k}{M}\big) \Big)
\nonumber \\
& = M\pi \Ct_q(M\pi z) .
\end{align}
Next replace $q^{2M}$ by $q$, let $z=\fr{a}{2M}$, and use
(\ref{q-sine-derive-2}) to deduce that
\begin{equation}\label{help1-cor-psiq-1}
\sum_{k=1}^M \Big( (\log q)\fr{2k+a-M}{2M} - \psi_{q}\big(\fr{2k+a}{2M}\big) -
\psi_{q}\big(1-\fr{2k+a}{2M}\big) \Big) = 0.
\end{equation}
That is,
\[
\sum_{k=1}^M h_{q,M,a}(k) = 0,
\]
and the claim is confirmed.
Finally, apply Lemma~\ref{lem-MurtSara} to the function $h_q(M,a,k)$ to complete the proof of part (b).

\noindent
\emph{Proof of Theorem~\ref{thm-main-1}(c)}\ Note that the function $f_{q,M,a}(k)$ is clearly periodic with period $M$.
By virtue of~(\ref{sine-cosine-theta}) and~(\ref{SineProd-2}) and after taking logarithm and differentiating we find
\[
\pi \sum_{k=1}^M \fr{\te_1'\big(\pi z +\fr{k\pi}{M} | \fr{\tau'}{M}\big)}{\te_1\big(\pi z +\fr{k\pi}{M} | \fr{\tau'}{M}\big)}
= M\pi \Ct_q(M\pi z),
\]
which upon substituting $z$ by $\fr{a}{2M}$ and $\tau'$ by $\tau$ yields
\[
\sum_{k=1}^M \fr{\te_1'\big(\fr{(2k+a)\pi}{2M} | \fr{\tau}{M}\big)}{\te_1\big(\fr{(2k+a)\pi}{2M} | \fr{\tau}{M}\big)} = 0.
\]
Now combine the foregoing identity with (\ref{theta-cot}) and the $q$-analogue of (\ref{sum-Ctq-zero}) to derive
\[
\sum_{k=1}^M f_{q,M,a}(k) 
= \sum_{k=1}^M \sum_{n=1}^{\infty}\fr{q^{\fr{2n}{M}}}{1-q^{\fr{2n}{M}}}\sin \fr{(2k+a)n\pi}{M} =0.
\]
Finally apply Lemma~\ref{lem-MurtSara} to the function $f_{q,M,a}(k)$ to complete the proof.
\section{Proof of Theorem~\ref{thm-main-2}}\label{sec:proof-main-2}
\noindent
(a)\ If $M=1$, then the desired formula 
\[
\psi_q\big(1+\fr{a}{2}\big) - \psi_q\big(-\fr{a}{2}\big) = \fr{(a+1)\log q}{2}
\]
follows by virtue of (\ref{reflection}). If $M>1$, then an immediate consequence of (\ref{SineProd-2}) is
\[
\sum_{k=1}^M \Big(\psi_{q}\big(\fr{2k+a}{2M}\big) - \psi_{q}\big(1-\fr{2k+a}{2M}\big)\Big) =
\fr{(a+1) \log q}{2},
\]
which is the desired relation.

\noindent
(b)\ If $M=1$, the statement follows by (\ref{reflection}). Now suppose that $M>1$.
Then by letting in (\ref{q-psi-key}) $z=\fr{a}{4M}$ and after simplification we find
\[
\fr{(a+2)M\log q}{2} -
\sum_{k=1}^M \Big( \psi_{q^{2M}}\big(\fr{4k+a}{4M}\big) - \psi_{q^{2M}}\big(\fr{4(M-k)-a}{4M}\big)\Big)
= M\pi \Ct_q\big(\fr{a\pi}{4}\big).
\]
Then uopn replacing $q^{2M}$ by $q$ and rearranging becomes
\[
\sum_{k=1}^M \Big(\psi_{q}\big(\fr{4k+a}{4M}\big) - \psi_{q}\big(\fr{4(M-k)-a}{4M}\big)\Big)
= \fr{(a+2)\log q}{4} - M\pi \Ct_{q^{1/(2M)}}\big(\fr{a\pi}{4}\big),
\]
which is the first identity of this part. As to the second formula,
simply use Corollary~\ref{cor-special-C} to evaluate the right-hand-side of the previous identity
and rearrange in the appropriate way.

\noindent
(c)\ If $M=1$, the statement follows by (\ref{reflection}). Now suppose that $M>1$.
Let in (\ref{q-psi-key}) $z=\fr{a}{6M}$ and simplify to obtain
\[
\fr{(a+3)M\log q}{3} -
\sum_{k=1}^M \Big(\psi_{q^{2M}}\big(\fr{6k+a}{6M}\big) - \psi_{q^{2M}}\big(\fr{6(M-k)-a}{6M}\big)\Big)
= M\pi \Ct_q\big(\fr{a\pi}{6}\big).
\]
Then replace in the foregoing formula $q^{2M}$ by $q$ and rearrange to get
\[
\sum_{k=1}^M \Big(\psi_{q}\big(\fr{6k+a}{6M}\big) - \psi_{q}\big(\fr{6(M-k)-a}{6M}\big)\Big)
= \fr{(a+3)\log q}{6} - M\pi \Ct_{q^{1/(2M)}}\big(\fr{a\pi}{6}\big).
\]
This establishes the first formula of this part.
Finally apply Corollary~\ref{cor-special-C} to complete the proof.
\section{Proof of Theorem~\ref{thm-main-3}}\label{sec:proof-main-3}
\noindent
In our proof we shall make an appeal to a result of
Weatherby~in~\cite{Weatherby} for which we need the following notation.
For any real number $\alpha$ and any positive integer $l$, let
\[
A_{\alpha,l} := \fr{(-1)^{l-1} \big(\pi\cot(\pi \alpha)\big)^{(l-1)}}{\pi^l(l-1)!}
\]
and let
\[
Z(l) = \begin{cases}
0 & \text{if $l$\ is odd,} \\
\fr{\zeta(l)}{\pi^l} & \text{otherwise.}
\end{cases}
\]
Notice that $Z(l)\in\mathbb{Q}$ for all positive integer $l$.
\begin{lemma}\label{lem-Weatherby}
Let $f$ be an algebraic valued function defined on the integers with period $M>1$ and let $l$ be a positive integer. Then
\[
\sum_{n\in\mathbb{Z}\setminus\{0\}} \fr{f(n)}{n^l} =
\Big(\fr{\pi}{M}\Big)^l \Big( \sum_{k=1}^{M-1} f(k) A_{\fr{k}{M},l}+ 2f(M) Z(l) \Big).
\]
\end{lemma}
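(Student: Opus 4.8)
The plan is to exploit the periodicity of $f$ by partitioning the index set $\mathbb{Z}\setminus\{0\}$ into residue classes modulo $M$. Writing each nonzero integer uniquely as $n = Mj + k$ with $j \in \mathbb{Z}$ and $k \in \{1, 2, \ldots, M\}$ (the class $k = M$ being that of the multiples of $M$), periodicity gives $f(n) = f(k)$, so that
\[
\sum_{n \in \mathbb{Z}\setminus\{0\}} \fr{f(n)}{n^l}
= \sum_{k=1}^{M} f(k) \sum_{\substack{n \equiv k \,(\mathrm{mod}\, M) \\ n \neq 0}} \fr{1}{n^l}.
\]
Everything then reduces to evaluating the inner sum over a single residue class. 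I would treat the classes $k = 1, \ldots, M-1$ and the class $k = M$ separately, since only the latter contains the excluded index $n = 0$.

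For $1 \le k \le M-1$ I would use the Mittag-Leffler (partial-fraction) expansion of the cotangent,
\[
\pi \cot(\pi \alpha) = \sum_{j \in \mathbb{Z}} \fr{1}{\alpha + j},
\]
understood as a symmetric limit. Differentiating $l-1$ times with respect to $\alpha$ gives
\[
\big(\pi \cot(\pi \alpha)\big)^{(l-1)} = (-1)^{l-1}(l-1)! \sum_{j \in \mathbb{Z}} \fr{1}{(\alpha + j)^l},
\]
so that, factoring out $M^l$ and setting $\alpha = k/M$,
\[
\sum_{n \equiv k \,(\mathrm{mod}\, M)} \fr{1}{n^l}
= \fr{1}{M^l} \sum_{j \in \mathbb{Z}} \fr{1}{(\fr{k}{M} + j)^l}
= \fr{\pi^l}{M^l}\, A_{\fr{k}{M}, l},
\]
the last equality being precisely the definition of $A_{\alpha, l}$. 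Summing against $f(k)$ over $k = 1, \ldots, M-1$ produces the first term $(\pi/M)^l \sum_{k=1}^{M-1} f(k) A_{\fr{k}{M}, l}$ of the claimed formula.

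For the class $k = M$ the inner sum runs over the nonzero multiples of $M$, so
\[
\sum_{\substack{n \equiv 0 \,(\mathrm{mod}\, M) \\ n \neq 0}} \fr{1}{n^l}
= \fr{1}{M^l} \sum_{j \in \mathbb{Z}\setminus\{0\}} \fr{1}{j^l}.
\]
Here the terms for $j$ and $-j$ cancel when $l$ is odd, while for even $l$ they combine to $2\sum_{j\ge 1} j^{-l} = 2\zeta(l)$; in both cases this equals $2\pi^l Z(l)$ by the definition of $Z(l)$. Multiplying by $f(M)/M^l$ gives the second term $(\pi/M)^l \, 2 f(M) Z(l)$, and adding the two contributions yields the asserted identity.

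The step requiring the most care is the justification of the term-by-term rearrangement, and this is where the parity and size of $l$ matter. For $l \ge 2$ the double series converges absolutely, so the regrouping by residue classes and the differentiation of the cotangent expansion are unconditionally valid. For $l = 1$ neither the ambient series nor the partial-fraction expansion converges absolutely, and one must read both as symmetric principal-value limits $\lim_{N\to\infty}\sum_{|n|\le N}$; the regrouping is then legitimate precisely because it respects this symmetric pairing, which is the very convention under which the cotangent expansion holds. I would also note that the algebraicity of $f$ plays no role in the identity itself — it is only invoked later, for the transcendence-type conclusions — so the lemma in fact holds for an arbitrary $M$-periodic function $f$.
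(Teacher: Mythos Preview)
Your argument is correct: the partition by residue classes, the identification of the inner sum for $1\le k\le M-1$ with $(\pi/M)^l A_{k/M,l}$ via the differentiated cotangent expansion, and the direct evaluation of the $k=M$ class as $2(\pi/M)^l Z(l)$ all go through exactly as you describe. Your caveat about the $l=1$ case (symmetric principal values throughout) is the right one, and your observation that the algebraicity hypothesis is irrelevant to the identity itself is also accurate.

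There is no proof in the paper to compare against: the lemma is quoted from Weatherby's thesis and used as a black box in the proof of Theorem~\ref{thm-main-3}. So you have supplied a proof where the paper offers none. Your route is in fact the natural one and is essentially how such formulas are established in the literature.
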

\noindent
\emph{Proof of Theorem~\ref{thm-main-3}.\ }
Let
\[
f(k) = \cot \fr{(2k+a)\pi}{2M}.
\]
Clearly $f(k)$ is well-defined on the integers since is $a$ is odd and it is periodic with period $M$.
It is a well-known fact that for any rational number $r$ we have that $\cot\pi r$ is an algebraic number.
Then by virtue of Lemma~\ref{lem-Weatherby} we get
\begin{equation}\label{key-sum-main-3}
\sum_{n\in\mathbb{Z}\setminus\{0\}} \fr{\cot\pi\big(\fr{2n+a}{2M}\big)}{n^l}
=\Big(\fr{\pi}{M}\Big)^l \Big( \sum_{k=1}^{M-1} \big(\cot\pi\fr{2k+a}{2M}\big) A_{\fr{k}{M},l}+ 2f(M) Z(l) \Big),
\end{equation}
which for $l=1$ reduces to
\begin{equation}\label{help2-cor-cot-1}
\begin{split}
\sum_{n\in\mathbb{Z}\setminus\{0\}} \fr{\cot \fr{(2n+a)\pi}{2M}}{n}
&=
\fr{\pi}{M}\sum_{k=1}^{M-1}A_{\fr{k}{M},1}\cot\fr{(2k+a)\pi}{2M} \\
&=
\fr{\pi}{M}\sum_{k=1}^{M-1} \fr{1}{\pi} \big(\cot\fr{k\pi}{M} \big) \cot\fr{(2k+a)\pi}{2M}.
\end{split}
\end{equation}
On the other hand, with the help of the $q$-analogue of Theorem~\ref{thm-main-1}(a) we deduce
\[
\sum_{n=1}^{\infty}\fr{\cot \fr{(-2n+a)\pi}{2M}}{-n}
= \sum_{n=1}^{\infty}\fr{\cot \fr{(2n-a)\pi}{2M}}{n}
= -\fr{1}{M} \sum_{k=1}^{M} \Big(\cot\fr{(2k-a)\pi}{2M}\Big)\psi\big(\fr{k}{M}\big),
\]
which implies that
\[
\begin{split}
\sum_{n\in\mathbb{Z}\setminus\{0\}} \fr{\cot \fr{(2n+a)\pi}{2M}}{n}
&=
\sum_{n=1}^{\infty}\fr{\cot \fr{(2n+a)\pi}{2M}}{n} + \sum_{n=1}^{\infty}\fr{\cot \fr{(2n-a)\pi}{2M}}{n} \\
&=
-\fr{1}{M} \sum_{k=1}^{M} \psi\big(\fr{k}{M}\big)\Big(\cot\fr{(2k+a)\pi}{2M} + \cot\fr{(2k-a)\pi}{2M} \Big).
\end{split}
\]
As
\[\cot\fr{(2M+a)\pi}{2M} + \cot\fr{(2M-a)\pi}{2M}  = \fr{\sin 2\pi}
{\fr{1}{2}\big(\cos\fr{a\pi}{M} - \cos\fr{2M\pi}{M} \big)} = 0,
\]
the $M$-th term in the last summation vanishes and so we get
\begin{equation}\label{help1-cor-cot-1}
\sum_{n\in\mathbb{Z}\setminus\{0\}} \fr{\cot \fr{(2n+a)\pi}{2M}}{n} =
-\fr{1}{M} \sum_{k=1}^{M-1} \psi\big(\fr{k}{M}\big)\Big(\cot\fr{(2k+a)\pi}{2M} + \cot\fr{(2k-a)\pi}{2M} \Big).
\end{equation}
Now combine (\ref{help1-cor-cot-1}) and (\ref{help2-cor-cot-1}) to obtain the desired formula.
\end{document}